\documentclass[a4paper, 10pt, oneside]{amsart}
\usepackage{algorithm, algorithmic, microtype}
\usepackage[textwidth=16cm, textheight=23.4cm]{geometry}

\newtheorem{theorem}{Theorem}[section]

\newtheorem{corollary}[theorem]{Corollary}

\newtheorem*{conjecture}{Conjecture}

\theoremstyle{definition}

\newtheorem{remark}[theorem]{Remark}

\newcommand{\N}{\mathbb N}

 \DeclareMathOperator{\ord}{ord}

\renewcommand{\t}{\, | \,}

\subjclass[2010]{11B30, 11P70, 20K01}

\begin{document}
\address{Institut f\"ur Mathematik und Wissenschaftliches Rechnen \\Karl-Franzens-Universit\"at Graz \\Hein\-rich\-stra\ss e 36\\8010 Graz, Austria}
\email{alfred.geroldinger@uni-graz.at, manfred.liebmann@uni-graz.at, andreas.philipp@uni-graz.at}
\author{Alfred Geroldinger and Manfred Liebmann and Andreas Philipp}
\thanks{This work was supported by the Austrian Science Fund FWF, Project No. P21576-N18.\\ \indent
We kindly acknowledge the support of the DECI (Distributed Extreme Computing Initiative) within the muHEART\\ \indent project for providing access to the \emph{cineca} supercomputer.}
\keywords{zero-sum sequence, Davenport constant}
\title{On the Davenport constant and on the structure of extremal zero-sum free sequences}

\begin{abstract}
Let $G = C_{n_1} \oplus \ldots \oplus C_{n_r}$ with $1 < n_1 \t
\ldots \t n_r$ be a finite abelian group, $\mathsf d^* (G) = n_1 +
\ldots + n_r - r$, and let $\mathsf d (G)$ denote the maximal length
of a zero-sum free sequence over $G$. Then $\mathsf d (G) \ge
\mathsf d^* (G)$, and the standing conjecture is that equality holds
for $G = C_n^r$. We show that equality does not hold for $C_2 \oplus
C_{2n}^r$, where $n \ge 3$ is odd and $r \ge 4$. This gives new
information on the structure of extremal zero-sum free sequences
over $C_{2n}^r$.
\end{abstract}

\maketitle

\bigskip
\section{Introduction} \label{1}
\bigskip

Let $G$ be an additively written finite abelian group, $G = C_{n_1}
\oplus \ldots \oplus C_{n_r}$ its direct decomposition into cyclic
groups, where $r = \mathsf r (G)$ is the rank of $G$ and $1 < n_1 \t
\ldots \t n_r$, and set
\[
\mathsf d^* (G) = \sum_{i=1}^r (n_i-1) \,, \quad \text{with} \quad
\mathsf d^* (G) = 0 \quad  \text{for} \ G \ \text{trivial} \,.
\]
We denote by $\mathsf d (G)$ the maximal length of a zero-sum free
sequence over $G$. Then $\mathsf D (G) = \mathsf d (G)+1$ is the
Davenport constant of $G$ (equivalently, $\mathsf D (G)$ is the
smallest integer $\ell \in \N$ such that every sequence $S$ over $G$
of length $|S| \ge \ell$ has a non-trivial zero-sum subsequence). The
Davenport constant has been studied since the 1960s, and it
naturally occurs in various branches of combinatorics, number
theory, and geometry. There is a well-known chain of inequalities
\[
\mathsf d^* (G) \le \mathsf d (G)  \le (n_r-1) + n_r \log \frac{|G|}{n_r} \,, \tag{$*$}
\]
which obviously is an equality for cyclic groups (\cite[Theorem 5.5.5]{Ge-HK06a}). Furthermore, equality on the left side holds for $p$-groups, groups of rank  two and
others (see \cite[Sections 2.2 and 4.2]{Ge09a} for a survey, and
\cite{Bh-SP07a, Su09a, Bh-Ha-SP09a, Yu-Ze09c, Fr-Sc10a, Sm11a, O-P-S-S12} for recent
progress). In contrast to these results, there are only a handful of explicit families of examples showing that $\mathsf d (G) > \mathsf d^* (G)$ can
happen, but the phenomenon is not understood at all. The two main
conjectures regarding $\mathsf D(G)$ state that  equality holds in the left side of $(*)$ for groups of rank
three and for groups of the form $C_n^r$.

In addition to the direct problem, the associated inverse problem
with respect to the Davenport constant---which asks for the
structure of maximal zero-sum free sequences---has attracted
considerable attention in the last decade. An easy exercise shows
that a zero-sum free sequence of maximal length over a cyclic group
consists of one element with multiplicity $\mathsf d (G)$. A
conjecture on the structure of such sequences over groups of the
form $C_n \oplus C_n$ was first stated in \cite[Section
10]{Ga-Ge99}. After various partial results, this conjecture was
settled recently: even  for general groups of rank two  the
structure of minimal zero-sum sequences with maximal length    was
completely determined (see \cite{Ga-Ge-Gr10a, Sc10b, Re11a}). Apart
from groups of rank two (and apart from the trivial case of
elementary $2$-groups) such a structural result is known only for
groups of the form $C_2^2 \oplus C_{2n}$ (see \cite{Sc10c}).

The inverse
results for groups of rank two support the conjecture that $\mathsf d^* (G) = \mathsf d (G)$ holds for groups of rank
three (which is outlined in \cite{Sc10c}). Much less is known for
groups of the form $C_n^r$. There is a covering result
(\cite[Theorem 6.6]{Ga-Ge03a}), which slightly supports the
conjecture that $\mathsf d^* (G) = \mathsf d (G)$ holds, and there is recent work by B. Girard
(\cite{Gi08b, Gi10a}) on the order of elements occurring in zero-sum
free sequences of maximal length.

In this paper, we present a series of groups of rank five, namely
$G_n = C_2 \oplus C_{2n}^4$ with $n \ge 3$ odd, such that $\mathsf d
(G_n)
> \mathsf d^* (G_n)$ (see Theorem \ref{3.1}). This is the first
series of groups for which equality in the left side of $(*)$ fails and which is somehow close to
the form $C_n^r$ (all groups known so far satisfying $\mathsf d^* (G) < \mathsf d (G)$  are
quite different). Moreover, these examples shed new light on recent
conjectures by B. Girard concerning the structure of extremal
sequences (see Corollary \ref{3.2} and the subsequent remark). A
computer based search in the group $C_2 \oplus C_{10}^4$ was
substantial for our work. This will be outlined in Section \ref{4}.

\bigskip
\section{Preliminaries} \label{2}
\bigskip

Our notation and terminology are consistent with \cite{Ga-Ge06b} and
\cite{Ge-HK06a}. We briefly gather some key notions and fix the
notation concerning sequences over finite abelian groups. Let
$\mathbb N$ denote the set of positive integers, $\mathbb P \subset
\N$ the set of prime numbers, and let $\mathbb N_0 = \mathbb N \cup
\{ 0 \}$. For  $a, b \in \mathbb Z$, we set $[a, b] = \{ x \in
\mathbb Z \mid a \le x \le b\}$.    Throughout, all abelian
groups will be written additively, and for $n \in \mathbb N$, we denote by  $C_n$  a cyclic group with $n$ elements.

Let $G$ be a finite abelian group. For a subset $A \subset G$, we set  $-A = \{-a \mid a \in A\}$.
An $s$-tuple $(e_1, \ldots, e_s)$ of elements of $G$ is said to be
 independent (or more briefly, the elements $e_1, \ldots, e_s$ are said
to be independent) \ if $e_i \ne 0$ for all $i \in [1,s]$ and, for
every $s$-tuple $(m_1, \ldots, m_s) \in \mathbb Z^{s}$,
\[
m_1 e_1 + \ldots + m_s e_s  =0 \qquad \text{implies} \qquad m_1e_1 =
\ldots = m_se_s = 0 \,.
\]
An $s$-tuple $(e_1, \ldots, e_s)$ of elements of $G$ is called a \
basis \ if it is independent and $G = \langle e_1\rangle \oplus
\ldots \oplus \langle e_s \rangle$. For a prime $p \in \mathbb P$,
we denote by $G_p = \{g \in G \mid \ord (g) \ \text{is a power of} \
p \}$ the $p$-primary component of $G$, and by $\mathsf r_p (G)$,
the $p$-rank of $G$ (which is the rank of $G_p$).

Let $\mathcal F(G)$ be the free abelian monoid with basis $G$. The
elements of $\mathcal F(G)$ are called \ {\it sequences} \ over $G$.
We write sequences $S \in \mathcal F (G)$ in the form
\[
S =  \prod_{g \in G} g^{\mathsf v_g (S)}\,, \quad \text{with} \quad
\mathsf v_g (S) \in \mathbb N_0 \quad \text{for all} \quad g \in G
\,.
\]
We call \ $\mathsf v_g (S)$  the \ {\it multiplicity} \ of $g$ in
$S$, and we say that $S$ \ {\it contains} \ $g$ \ if \ $\mathsf v_g
(S) > 0$.   A sequence $S_1 $ is called a \ {\it subsequence} \ of
$S$ \ if \ $S_1 \, | \, S$ \ in $\mathcal F (G)$ \ (equivalently, \
$\mathsf v_g (S_1) \le \mathsf v_g (S)$ \ for all $g \in G$). If a
sequence $S \in \mathcal F(G)$ is written in the form $S = g_1 \cdot
\ldots \cdot g_l$, we tacitly assume that $l \in \mathbb N_0$ and
$g_1, \ldots, g_l \in G$.

\smallskip

For a sequence
\[
S \ = \ g_1 \cdot \ldots \cdot g_l \ = \  \prod_{g \in G} g^{\mathsf
v_g (S)} \ \in \mathcal F(G) \,,
\]
we call
\[
|S| = l = \sum_{g \in G} \mathsf v_g (S) \in \mathbb N_0 \qquad
\text{the \ {\it length} \ of \ $S$} \,,
\]
\[
\sigma (S) = \sum_{i = 1}^l g_i = \sum_{g \in G} \mathsf v_g (S) g
\in G \qquad \text{the \ {\it sum} \ of \ $S$} \,,  \ \text{and}
\]
\[
\Sigma (S) =  \Bigl\{ \sum_{i \in I} g_i \mid \emptyset \ne I
\subset [1, l] \Bigr\} \subset G  \qquad \text{the \ {\it set of
 subsums} \ of \ $S$} \,.
\]
The sequence \ $S$ \ is called
\begin{itemize}

\item a {\it zero-sum sequence} \ if \ $\sigma (S) = 0$,

\item {\it zero-sum free} \ if there is no  non-trivial zero-sum subsequence, and

\item a {\it minimal zero-sum sequence} \ if $1 \ne S$, $\sigma(S)=0$,  and every $S'|S$ with $1\leq |S'|<|S|$  is
      zero-sum free.
\end{itemize}

\bigskip
\section{The Main Theorem and  its Corollary} \label{3}
\bigskip

\medskip
\begin{theorem} \label{3.1}
Let $G = C_2^i \oplus C_{2n}^{5-i}$ with $i \in [1,4]$ and $n \ge 3$
odd. Then $\mathsf d (G) > \mathsf d^* (G)$.
\end{theorem}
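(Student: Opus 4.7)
The plan is to prove Theorem~\ref{3.1} by exhibiting, for each pair $(i,n)$ with $i \in [1,4]$ and $n \ge 3$ odd, an explicit zero-sum free sequence $S$ over $G = C_2^i \oplus C_{2n}^{5-i}$ of length $|S| = \mathsf d^*(G)+1$; the bound $\mathsf d(G) \ge |S| > \mathsf d^*(G)$ follows immediately. I would first concentrate on the key case $i=1$, for which $G_1 = C_2 \oplus C_{2n}^4$ and $\mathsf d^*(G_1) = 8n-3$. Fixing a basis $(e_0,e_1,e_2,e_3,e_4)$ of $G_1$ with $\ord(e_0) = 2$ and $\ord(e_j) = 2n$ for $j \in [1,4]$, the candidate $S$ should take the shape of the trivial extremal sequence $e_0 \cdot e_1^{2n-1} \cdots e_4^{2n-1}$ with a short block of basis elements replaced by a carefully chosen collection of linear combinations in $e_0, e_1, \ldots, e_4$, following the pattern suggested by the computer search in $C_2 \oplus C_{10}^4$ outlined in Section~\ref{4}.

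The verification that such an $S$ is zero-sum free then reduces to showing that the system of linear congruences arising from a hypothetical vanishing subsum admits no non-trivial solution within the allowed multiplicity ranges. The hypothesis that $n$ is odd is decisive here: since $\gcd(2,n)=1$, the Chinese Remainder Theorem decouples the congruences into a part modulo $2$ (controlled by the $C_2$-component and the $2$-torsion of $C_{2n}$) and a part modulo $n$ (controlled by the odd part of $C_{2n}$). Each part is then amenable to a finite case analysis that is uniform in $n$, since the bounds on the multiplicities are linear in $n$.

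For the remaining cases $i \in [2,4]$, I would construct the analogue of $S$ directly over $G_i$. Using the isomorphism $C_{2n} \cong C_2 \oplus C_n$ one has $G_i \cong C_2^5 \oplus C_n^{5-i}$; the canonical form of $G_i$ is $C_2^i \oplus C_{2n}^{5-i}$ and a short computation gives $\mathsf d^*(G_i) = \mathsf d^*(G_1) - 2(i-1)(n-1)$. A sequence of the same shape on a suitable basis of $G_i$ should yield length $\mathsf d^*(G_i)+1$, with the perturbation block adjusted to reflect the loss of $i-1$ copies of the $C_n$-summand, and zero-sum freeness is verified by exactly the same CRT-based analysis.

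The main obstacle is the $i=1$ step: establishing uniformly in $n$ that the explicit candidate remains zero-sum free for every odd $n \ge 3$, and not merely for the instance $n=3$ where it was discovered computationally. This forces one to argue by tracking every admissible combination of multiplicities simultaneously, combining the $\bmod\,2$ and $\bmod\,n$ constraints with the linear-in-$n$ bounds on each multiplicity. Once this is settled, the cases $i \in [2,4]$ should follow as routine adaptations of the same argument.
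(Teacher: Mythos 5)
Your proposal correctly identifies the right high-level strategy for the new case $i=1$ (exhibit an explicit zero-sum free sequence over $C_2\oplus C_{2n}^4$ of length $\mathsf d^*(G)+1=8n-2$, uniformly in odd $n\ge 3$), and your computation $\mathsf d^*(G_1)=8n-3$ is correct. But the proposal stops exactly where the actual mathematical content begins: you never write down the candidate sequence, and you never carry out the verification that it is zero-sum free. In the paper this is the entire proof: four of the elements are of the form $e_1+e_j$ taken with multiplicities $2n-2$ or $2n-3$, and seven further elements $g_5,\ldots,g_{11}$ are very specific combinations with coefficients such as $\frac{3n\pm1}{2}$, $\frac{n\pm1}{2}$, chosen so that the resulting system of congruences modulo $2n$ (plus one congruence modulo $2$ coming from the $C_2$-coordinate) forces every zero-sum subsequence to be trivial; the check is a nontrivial case distinction according to $l_5+\cdots+l_{11}\in\{2,4,6\}$ and the corresponding forced values of $l_2\in\{n-1,n-2,n-3,2n-3\}$. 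Saying that the sequence ``should take the shape'' suggested by the computer search, and that a CRT decoupling into mod $2$ and mod $n$ parts ``is then amenable to a finite case analysis uniform in $n$,'' is a research plan, not a proof: whether any sequence of that shape works at all, and which coefficients make the case analysis close, is precisely the discovery the theorem records. (Note also that the paper does not in fact decouple via CRT; it argues directly modulo $2n$, eliminating variables and using the parity congruence only once.)

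There is a second gap in your treatment of $i\in[2,4]$. These cases do not follow from the $i=1$ construction by ``routine adaptations'': replacing a $C_{2n}$-summand by a $C_2$-summand produces a smaller group, and the inequality $\mathsf d>\mathsf d^*$ does not descend in that direction (the transfer result quoted after Theorem~\ref{3.1}, \cite[Proposition 5.1.11]{Ge-HK06a}, goes the other way, from a sub-sum of the canonical decomposition up to $G$). The paper settles these cases by citing known results: \cite[Theorem 4]{Ge-Sc92} for $i\in\{3,4\}$ and \cite[Theorem 3.3]{Ga-Ge99} for $i=2$. Either cite such results or give genuinely new constructions with their own verification; as written, your argument establishes none of the four cases.
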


\medskip
Before we start the proof of Theorem \ref{3.1}, we would like
to remark that its statement easily extends to groups of higher
rank. Indeed, let $G = C_{n_1} \oplus \ldots \oplus C_{n_r}$ with $1
< n_1 \t \ldots \t n_r$ and let $\emptyset \ne I \subset [1, r]$. If
\[
\mathsf d \bigl( \oplus_{i \in I} C_{n_i} \bigr) > \mathsf d^*
\bigl( \oplus_{i \in I} C_{n_i} \bigr) \,,
\]
then a straightforward construction shows that $\mathsf d (G) >
\mathsf d^* (G)$ (see \cite[Proposition 5.1.11]{Ge-HK06a}). Thus the
interesting groups $G$ with $\mathsf d (G) > \mathsf d^* (G)$ are
those with small rank. Recall that there is no known group $G$ of
rank three with $\mathsf d (G)
> \mathsf d^* (G)$, and there is only one series of groups $G$ of
rank four such that $\mathsf d (G) > \mathsf d^* (G)$ (see
\cite[Theorem 3]{Ge-Sc92}).

\medskip
\begin{proof}[Proof of Theorem \ref{3.1}]
For $i \in \{3,4\}$, this follows from \cite[Theorem 4]{Ge-Sc92}, and,
for $i=2$, from \cite[Theorem 3.3]{Ga-Ge99}. Suppose that $i=1$ and
let $(e_1, \ldots, e_5)$ be a basis of $G$ with $\ord (e_1) = 2$ and
$\ord (e_2) = \ldots = \ord (e_5) = 2n$. We define
\[
g_1 = e_1+e_2, \quad  g_2 = e_1+e_3, \quad  g_3  = e_1 + e_4, \quad
g_4 = e_1 + e_5 \,,
\]
\[
\begin{aligned}
g_5 & =  \phantom{e_1 + \ } \frac{3n-1}{2} e_2 + \frac{3n+1}{2} e_3
+
\frac{3n+1}{2}e_4 + \frac{3n+1}{2} e_5 \,, \\
g_6 & =  \phantom{e_1 + \ } \frac{3n-1}{2} e_2 + \frac{3n+1}{2} e_3
+
\frac{3n-1}{2}e_4 + \frac{n+1}{2} e_5 \,, \\
g_7 & =  \phantom{e_1 + \ } \frac{3n+3}{2} e_2 + \frac{n+1}{2} e_3 +
\frac{n-1}{2}e_4 + \frac{n+1}{2} e_5 \,, \\
g_8 & = \phantom{e_1 + \ } \frac{n-1}{2} e_2 + \frac{n+1}{2} e_3 +
\frac{3n+1}{2}e_4 + \frac{n-1}{2} e_5 \,, \\
g_9 & = \phantom{e_1 + \ } \frac{n-1}{2} e_2 + \frac{n+1}{2} e_3 +
\frac{n+1}{2}e_4 + \frac{n+1}{2} e_5 \,, \\
g_{10} & = \phantom{e_1 + \ } \frac{3n+1}{2} e_2 + \frac{3n+1}{2}
e_3 +
\frac{n+1}{2}e_4 + \frac{3n+1}{2} e_5 \,, \\
g_{11} & = \phantom{e_1 + \ } \frac{n+3}{2} e_2 + \frac{3n+1}{2} e_3
+
\frac{3n+1}{2}e_4 + \frac{3n-1}{2} e_5 \,, \\
g_{12} & = e_1 + \frac{n+1}{2} e_2 + \frac{n-1}{2} e_3 +
\frac{n+1}{2}e_4 + \frac{3n+1}{2} e_5 \,, \\
\end{aligned}
\]
and assert that
\[
U = g_1^{2n-2} g_2^{2n-3} g_3^{2n-2} g_4^{2n-2} g_5 g_6 g_7 g_8 g_9
g_{10} g_{11}  g_{12}
\]
is a minimal zero-sum sequence. Obviously, $U$ is a zero-sum
sequence of length $|U|  = 8n-1 = \mathsf d^* (G)+2$. Thus it
suffices to show that $S^* = g_{12}^{-1}U$ is zero-sum free. Let
\[
S = g_1^{l_1} \cdot \ldots \cdot g_{11}^{l_{11}}
\]
be a zero-sum subsequence of $g_{12}^{-1}U$, where $l_i = \mathsf
v_{g_i} (S)$ for all $i \in [1, 11]$. Thus $l_1 \in [0, 2n-2]$, $l_2
\in [0, 2n-3]$, $l_3 \in [0, 2n-2]$, $l_4 \in [0, 2n-2]$, and $l_i
\in \{0, 1\}$ for all $i \in [5, 11]$. We have to show that $|S| =
l_1 + \ldots + l_{11} = 0$.

\smallskip
Since $\sigma (S) = 0$, we obtain the following system of initial
congruences:
\begin{align}
l_1 +  l_2 + l_3 + l_4 & \equiv 0 \mod 2 \,,\\
l_1 +   \frac{3n-1}{2}l_5 + \frac{3n-1}{2} l_6  + \frac{3n+3}{2} l_7 + \frac{n-1}{2} l_8 + \frac{n-1}{2} l_9 + \frac{3n+1}{2}l_{10} + \frac{n+3}{2} l_{11} & \equiv 0 \mod 2n \,,\\
l_2 + \frac{3n+1}{2} l_5 + \frac{3n+1}{2}l_6 + \frac{n+1}{2} l_7 +
\frac{n+1}{2} l_8 + \frac{n+1}{2} l_9 + \frac{3n+1}{2} l_{10} +
\frac{3n+1}{2} l_{11} &
\equiv 0 \mod 2n \,,\\
l_3 + \frac{3n+1}{2} l_5 + \frac{3n-1}{2}l_6 + \frac{n-1}{2} l_7 +
\frac{3n+1}{2} l_8 + \frac{n+1}{2} l_9 + \frac{n+1}{2} l_{10} +
\frac{3n+1}{2} l_{11} &
\equiv 0 \mod 2n \,,\\
l_4 + \frac{3n+1}{2} l_5 + \frac{n+1}{2}l_6 + \frac{n+1}{2} l_7 +
\frac{n-1}{2} l_8 + \frac{n+1}{2} l_9 + \frac{3n+1}{2} l_{10} +
\frac{3n-1}{2} l_{11} & \equiv 0 \mod 2n\,.
\end{align}

\medskip
By subtracting equation $(2)$ from $(3)$, subtracting $(4)$ from $(3)$, and subtracting $(5)$ from $(3)$, we obtain

\begin{align}
l_1 & \equiv l_2 + l_5 + l_6+l_8+l_9+(n-1)(l_7+l_{11}) \mod 2n \,, \\
l_3 & \equiv l_2 + l_6 + l_7 + n(l_8 + l_{10}) \mod 2n \,\mbox{,and} \\
l_4 & \equiv l_2 + nl_6 + l_8 + l_{11} \mod 2n \,.
\end{align}

\medskip
Next we form a congruence modulo $2$, namely
\[
\begin{aligned}
0 & \equiv l_1+ l_2+l_3+l_4 \\
 & \equiv l_2 + l_5 + l_6 + l_8 + l_9 + \\
 & \phantom{\equiv \ \ } l_2 + \\
  & \phantom{\equiv \ \ } l_2 + l_6 + l_7 + l_8 + l_{10} + \\
   & \phantom{\equiv \ \ } l_2 + l_6 + l_8 + l_{11} \\
    & \equiv l_5 + l_6 + l_7 + l_8 + l_9 + l_{10} + l_{11} \mod 2
    \,.
\end{aligned}
\]
Therefore we get  $l_5+l_6+l_7+l_8+l_9+l_{10} + l_{11} \in
\{0,2,4,6\}$. If $l_5+l_6+l_7+l_8+l_9+l_{10} + l_{11} = 0$, then
$\sigma (S) = 0$ implies immediately that $l_1=l_2=l_3=l_4=0$ and
thus $|S| = 0$. Thus we suppose that $l_5 + \ldots + l_{11} \in
\{2,4,6\}$.

\smallskip
Adding  $(3)$ and $(5)$ and inserting $(8)$, we obtain that
\[
2l_2 + (n+1)(l_5+l_6+l_7+l_8+l_9+l_{10}+l_{11}) \equiv 0 \mod 2n \,.
\]
Thus we get that either
\[
l_5 + \ldots + l_{11} = 2 \quad \text{and hence} \quad l_2 = n-1
\]
or
\[
l_5 + \ldots + l_{11} = 4 \quad \text{and hence} \quad l_2 = n-2
\]
or
\[
l_5 + \ldots + l_{11} = 6 \quad \text{and hence} \quad l_2 \in
\{n-3, 2n-3\} \,.
\]
We distinguish these four cases.

\bigskip
\noindent
CASE 1: \,$l_5 + \ldots + l_{11} = 2$ and $l_2 = n-1$.

\smallskip
\noindent
CASE 1.1: \,$l_6=1$.

If $l_8+l_{11} = 2$, then $l_5=l_7=l_9=l_{10}=0$, $l_1=l_3=0$, and
$l_4=1$, a contradiction to $(1)$.

If $l_8+l_{11}= 0$, then $l_4 = 2n-1$, a contradiction to $l_4 \in
[0, 2n-2]$.

Thus we get $l_8 + l_{11}=1$. If $l_8=1$, then
$l_5=l_7=l_9=l_{10}=l_{11} = 0$ and $l_1=n+1$, a contradiction to
$(2)$. If $l_8 = 0$, then $l_{11}=1$, $l_5=l_7=l_9=l_{10}=0$, and
$l_1=2n-1$, a contradiction to $l_1 \in [0, 2n-2]$.

\smallskip
\noindent
CASE 1.2: \,$l_6=0$.

If $l_8+l_{10}=2$, then $l_5=l_7=l_9=l_{11}=0$ and $l_1=n$, a
contradiction to $(2)$.

Suppose that $l_8+l_{10}=0$. Then $l_4=n-1+l_{11}$, $l_3=n-1+l_7$,
and $l_1= (n-1)(1+l_7+l_{11})+l_5+l_9$. If $l_7+l_{11}=1$, then
$l_1=2n-2+l_5+l_9$ and hence $l_1= 2n-2$, a contradiction to $(1)$.
If $l_7 + l_{11}=0$, then $l_5=l_9=1$ and $l_1=n+1$, a contradiction
to $(2)$. If $l_7 + l_{11}=2$, then $l_5=l_9=0$ and $l_1=n-3$, a
contradiction to $(2)$.

Suppose that $l_8+l_{10}=1$. Then $l_3 \equiv 2n-1+l_7 \mod 2n$,
which implies $l_7=1$ and $l_3=0$. Then $l_1 \equiv
2n-2+l_5+l_6+l_8+l_9 \mod 2n$, which implies $l_8=0$, $l_{10}=1$, and
$l_1 = 2n-2$, a contradiction to $(2)$.

\bigskip
\noindent
CASE 2: \,$l_5 + \ldots + l_{11} = 4$ and $l_2=n-2$.

\smallskip
\noindent
CASE 2.1: \,$l_6=1$.

If $l_8+l_{11}=1$, then $l_4 = 2n-1$, a contradiction to $l_4 \in
[0, 2n-2]$.

Suppose that $l_8+l_{11}=0$. If $l_7=1$, then $l_1 \equiv
2n-2+l_5+l_9 \mod 2n$. Since $l_1 \in [0, 2n-2]$ and $l_5 + \ldots +
l_{11} = 4$, it follows that $l_5=l_9=1$ and $l_1=0$, a
contradiction to $(2)$. If $l_7=0$, then $l_5=l_6=l_9=l_{10}=1$ and
$l_3 \equiv 2n-1 \mod 2n$, a contradiction to $l_3 \in [0, 2n-2]$.

Suppose that $l_8+l_{11}=2$. If $l_7=1$, then $l_5=l_9=l_{10}=0$ and
$l_1 = n-2$, a contradiction to $(2)$. If $l_7 =0$, then $l_3 \equiv
n-1 + n(1+l_{10}) \mod 2n$ and thus $l_{10}=1$, $l_5=l_9=0$, and
$l_1 \equiv 2n-1 \mod 2n$, a contradiction to $l_1 \in [0, 2n-2]$.

\smallskip
\noindent
CASE 2.2: \,$l_6=0$.

If $l_8+l_{10}=0$, then $l_5=l_7=l_9=l_{11}=1$ and $l_1=n-2$, a
contradiction to $(2)$.

Suppose that $l_8+l_{10}=1$. If $l_7=1$, then $l_3 \equiv 2n-1 \mod
2n$, a contradiction to $l_3 \in [0, 2n-2]$.  If $l_7=0$, then
$l_5=l_9=l_{11}=1$ and $l_1 \equiv 2n-1 + l_8 \mod 2n$, which implies
that $l_8=1$, $l_{10}=0$, and $l_1=0$, a contradiction to $(2)$.

Suppose that $l_8+l_{10}=2$. If $l_7+l_{11} = 0$, then $l_5=l_9=1$
and $l_1=n+1$, a contradiction to $(2)$. If $l_7+l_{11}=2$, then
$l_5=l_9=0$ and $l_1=n-3$, a contradiction to $(2)$. If
$l_7+l_{11}=1$, then $l_5+l_9 =1$ and $l_1 \equiv 2n-1 \mod 2n$, a
contradiction to $l_1 \in [0, 2n-2]$.

\bigskip
\noindent
CASE 3: \,$l_5 + \ldots + l_{11} = 6$ and $l_2=n-3$.

\smallskip
If $0 \in \{l_5, l_7, l_8, l_9, l_{10} \}$, then $l_4 \equiv 2n-1
\mod 2n$, a contradiction to $l_4 \in [0, 2n-2]$. If $l_6=0$, then
$l_1 = n-2$, a contradiction to $(2)$. If $l_{11} = 0$, then
$l_1=0$, a contradiction to $(2)$.

\bigskip
\noindent
CASE 4: \,$l_5 + \ldots + l_{11} = 6$ and $l_2 = 2n-3$.

\smallskip
If $l_5=0$ or $l_{11}=0$, then $l_3 \equiv 2n-1 \mod 2n$, a
contradiction to $l_3 \in [0, 2n-2]$. If $l_6=0$, then $l_4 \equiv
2n-1 \mod 2n$, a contradiction to $l_4 \in [0, 2n-2]$. If
$l_{10}=0$, then $l_1 \equiv 2n-1 \mod 2n$, a contradiction to $l_1
\in [0, 2n-2]$. If $l_7=0$, then $l_1=n$; if $l_8=0$, then
$l_1=2n-2$; if $l_9=0$, then $l_1=2n-2$. All these three cases give
a contradiction to $(2)$.
\end{proof}

\medskip
In two recent papers, B. Girard states a conjecture on the structure of
extremal zero-sum free sequences.  We recall the required
terminology.

Let  $G = C_{q_1} \oplus \ldots \oplus C_{q_s}$ be the direct
decomposition of the group $G$ into cyclic groups of prime power order, where
$s = \mathsf r^* (G) = \sum_{p \in \mathbb P} \mathsf r_p (G)$ is
the total rank of $G$, and set
\[
\mathsf k^* (G) = \sum_{i=1}^s \frac{q_i-1}{q_i} \,, \quad
\text{with} \quad \mathsf k^* (G) = 0 \ \text{for} \ G \
\text{trivial} \,.
\]
For a sequence $S = g_1 \cdot \ldots \cdot g_l$ over $G$,
\[
\mathsf k (S) = \sum_{i=1}^l \frac{1}{\ord (g_i)}
\]
denotes its {\it cross number}, and
\[
\mathsf k (G) = \max \{ \mathsf k (U) \mid U \in \mathcal F (G) \
\text{zero-sum free} \} \in \mathbb Q
\]
is the {\it little cross number} of $G$. If $(e_1, \ldots, e_s)$ is
a basis of $G$ with $\ord (e_i) = q_i$ for all $i \in [1, s]$, then
$S = \prod_{i=1}^s e_i^{q_i-1}$ is zero-sum free and hence $\mathsf
k^* (G) = \mathsf k (S) \le \mathsf k (G)$. Equality holds in
particular for $p$-groups, and there is no known group $H$ with
$\mathsf k^* (H) < \mathsf k (H)$. We refer to \cite[Chapter
5]{Ge-HK06a} for more information on the cross number and to
\cite{Gi09a, Ge-Gr09c} for recent progress. Now we formulate the
conjecture of B. Girard (see \cite[Conjecture 1.2]{Gi08b} and
\cite[Conjecture 2.1]{Gi10a}).

\medskip
\begin{conjecture}[B. Girard]
If $G \cong C_{n_1} \oplus
\ldots \oplus C_{n_r}$ with $1 < n_1 \t \ldots \t n_r$ and $S \in
\mathcal F (G)$ is zero-sum free with $|S| \ge \mathsf d^* (G)$,
then
\[
\mathsf k (S) \le \sum_{i=1}^r \frac{n_i-1}{n_i} \,.
\]
\end{conjecture}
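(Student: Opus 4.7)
The plan is twofold: to sketch what I believe is the most plausible line of attack on the conjecture itself, and to carry out the direct cross-number computation on the sequence produced in the proof of Theorem \ref{3.1}, since this is the most informative test available within the scope of the paper.

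I would start with the cases where the machinery is already in place. When $G$ is a $p$-group, the invariant factor decomposition coincides with the elementary divisor decomposition, so $\sum_{i=1}^{r}(n_i-1)/n_i$ equals $\mathsf k^*(G)$; because $\mathsf k(G) = \mathsf k^*(G)$ is known for $p$-groups, the bound $\mathsf k(S) \le \mathsf k(G)$ holds automatically with no length hypothesis. For groups of rank two, the inverse theorems cited in the introduction pin down the structure of zero-sum free sequences of maximal length, and $\mathsf k(S)$ can be computed case by case from each standard form. For higher rank with mixed primes, I would try induction on $n_r$: split $S = T_1 T_2$ with $T_1$ made of the elements of order exactly $n_r$, observe that each such element contributes only $1/n_r$ to $\mathsf k(S)$, and try to control $\mathsf k(T_2)$ via the Davenport constant of the subgroup generated by $T_2$ together with the inductive hypothesis.

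In parallel I would perform the direct test on the sequence $S^{*} = g_{12}^{-1} U$ built in the proof of Theorem \ref{3.1} for $G = C_2 \oplus C_{2n}^4$. Each $g_i$ has order exactly $2n$: for $i \in [1,4]$ because $g_i$ contains $e_1$ together with $e_{i+1}$ of order $2n$, and for $i \in [5,11]$ because among the coefficients always appear consecutive integers of the form $(3n\pm1)/2$ or $(n\pm1)/2$, which forces the $\gcd$ of the coefficients with $2n$ to be $1$. Hence
\[
\mathsf k(S^{*}) \;=\; \frac{3(2n-2) + (2n-3) + 7}{2n} \;=\; 4 - \frac{1}{n},
\]
while the conjectured bound is $\tfrac12 + 4 \cdot \tfrac{2n-1}{2n} = \tfrac{9}{2} - \tfrac{2}{n}$. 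The gap $\tfrac12 - \tfrac{1}{n}$ is positive for every $n \ge 3$, so the conjecture survives on the extremal sequence from Theorem \ref{3.1}, but by a margin that shrinks as $n$ grows.

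The hard part is the regime $|S| > \mathsf d^*(G)$, which by Theorem \ref{3.1} is non-empty precisely for the groups $C_2 \oplus C_{2n}^4$ under study, and in which the inverse Davenport problem is wide open. Without a structural description of these extra-long zero-sum free sequences, any inductive bound on $\mathsf k(T_2)$ threatens to be too loose to match the stated right-hand side. Given the narrow margin in the calculation above and the computer search in $C_2 \oplus C_{10}^4$ promised in Section \ref{4}, my first concrete step would be to enumerate the extremal zero-sum free sequences in the smallest case $n = 3$, either to turn up a sequence violating the bound or to accumulate sharp evidence that this delicate inequality is in fact the correct one.
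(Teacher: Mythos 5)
The statement you set out to prove is labelled a \emph{Conjecture} in the paper and is proved nowhere in it: the authors only record that it is known for cyclic groups, $p$-groups and groups of rank two (by citation to Girard and to the rank-two inverse theorems), and then use Corollary \ref{3.2} and Remark \ref{3.3} to show that the hypothesis $|S| \ge \mathsf d^*(G)$ is sharp. So there is no paper proof to compare against, and your submission is, as you yourself signal, a research plan rather than a proof. The only parts that are actually complete are the $p$-group case (where the invariant factors are prime powers, so $\sum_{i=1}^r (n_i-1)/n_i = \mathsf k^*(G) = \mathsf k(G)$ and the bound holds with no length hypothesis) and the appeal to the rank-two inverse results; the general mixed-order case is untouched. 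The proposed induction on $n_r$ is the genuine gap: splitting off the elements of order $n_r$ and bounding $\mathsf k(T_2)$ via the Davenport constant of the subgroup generated by $T_2$ yields bounds that are much weaker than the invariant-factor expression $\sum (n_i-1)/n_i$, and --- more fundamentally --- such an argument never uses the hypothesis $|S| \ge \mathsf d^*(G)$. That hypothesis cannot be dispensed with: Corollary \ref{3.2} exhibits, for $G = C_6^5$, a zero-sum free sequence $T$ of length $\mathsf d^*(G)-1$ with $\mathsf k(T) > r(2n-1)/(2n)$, so any correct proof must exploit the length assumption structurally, and no structural (inverse) description of such long zero-sum free sequences is known beyond rank two.

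Your consistency check on $S^* = g_{12}^{-1}U$ is essentially correct and is the most useful concrete content of the proposal: every $g_i$ indeed has order $2n$ (for $g_{10}$ there is no pair of consecutive coefficients, but $(3n+1)/2$ and $(n+1)/2$ differ by $n$, which together with $n$ odd again forces the relevant gcd with $2n$ to be $1$), so $\mathsf k(S^*) = (8n-2)/(2n) = 4 - 1/n$, below the conjectured bound $9/2 - 2/n$; thus the paper's new extremal sequences do not refute the conjecture. Note only that the margin $1/2 - 1/n$ grows, rather than shrinks, as $n$ increases. A verification on one family of sequences, together with a proposed computer enumeration for $n=3$, is evidence, not a proof; the conjecture remains open, and your proposal does not close it.
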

The conjecture holds true for cyclic groups, $p$-groups (see
\cite[Proposition 2.3]{Gi08b}) and for groups of rank two (this
follows from the characterization of all minimal zero-sum sequences
of maximal length, \cite{Sc10b, Ga-Ge-Gr10a}). Suppose that  $G =
C_n^r$. If true, the conjecture would imply that $\mathsf d (G) =
\mathsf d^* (G)$ and, moreover, that all elements occurring in a
zero-sum free sequence of length $\mathsf d^* (G)$ have maximal
order $n$ (\cite[Proposition 2.1]{Gi08b}).

\medskip
\begin{corollary} \label{3.2}
Let $G = C_{2n}^r$ with $n \ge 3$ odd and $r \ge 5$. Then there
exists a zero-sum free sequence $T \in \mathcal F (G)$ and an
element $g \in G$ with $\ord (g) = n$ such that
\[
\mathsf v_g (T) = n-1 \,, \ |T| = \mathsf d^* (G) - (n-2) \quad
\text{and} \quad \mathsf k (T) = r \frac{2n-1}{2n} + \frac{1}{2n}
\,.
\]
In particular, if  $n=3$ and $r=5$, then  $|T| = \mathsf d^* (G)-1$,  $\mathsf k
(T)
> r(2n-1)/(2n)$, and there is no zero-sum free  sequence $T^* \in
\mathcal F (G)$ such that $T^* = g_1g_2T'$ and $T = (g_1+g_2)T'$,
where $g_1, g_2 \in G$ and $T' \in \mathcal F (G)$.
\end{corollary}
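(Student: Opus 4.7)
My plan is to construct $T$ by promoting the zero-sum free sequence $S^* = g_{12}^{-1} U$ from the proof of Theorem~\ref{3.1} into $G$, attaching $n-1$ copies of a suitable element of order~$n$, and (for $r > 5$) appending a standard zero-sum free block in the extra factors. Fix a basis $(f_1, \ldots, f_r)$ of $G = C_{2n}^r$ with $\ord(f_i) = 2n$, and identify the subgroup $H = \langle n f_1, f_2, f_3, f_4, f_5\rangle$ with $C_2 \oplus C_{2n}^4$ via $(e_1, e_2, e_3, e_4, e_5) = (n f_1, f_2, f_3, f_4, f_5)$. By Theorem~\ref{3.1} the sequence $S^*$ is zero-sum free in $H$ of length $8n-2$, and a brief case-by-case inspection of the coefficients shows $\ord(g_j) = 2n$ for every $j \in [1,11]$ (in each case the coordinate-gcd with $2n$ reduces to $1$).

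Set $g = 2 f_1$; since $n$ is odd, $\ord(g) = n$. Define
\[
T \;=\; g^{n-1} \cdot S^* \cdot \prod_{i=6}^{r} f_i^{2n-1}.
\]
Elements of $H$ have $f_1$-coordinate in $\{0, n\}$ while $g$ has $f_1$-coordinate $2$, and $g \neq f_i$ for $i \geq 6$; hence $\mathsf v_g(T) = n-1$. Direct bookkeeping yields $|T| = (n-1) + (8n-2) + (r-5)(2n-1) = r(2n-1) - (n-2)$ and $\mathsf k(T) = \frac{n-1}{n} + \frac{8n-2}{2n} + \frac{(r-5)(2n-1)}{2n} = \frac{r(2n-1)+1}{2n}$.

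For zero-sum freeness, decompose any zero-sum subsequence of $T$ as $g^k S_0 T_0$ with $k \in [0, n-1]$, $S_0 \mid S^*$, $T_0 \mid \prod_{i \geq 6} f_i^{2n-1}$. Projecting onto $\langle f_6, \ldots, f_r\rangle$ forces $T_0 = 1$; the $f_1$-component of $g^k S_0$ equals $2k + \varepsilon n$ for some $\varepsilon \in \{0, 1\}$, and $2k + \varepsilon n \equiv 0 \pmod{2n}$ together with $k \in [0, n-1]$ and $n$ odd forces $k = 0$; then $S_0 = 1$ by zero-sum freeness of $S^*$. For the specialization $n=3$, $r=5$, substitution gives $|T| = 24 = \mathsf d^*(G) - 1$ and $\mathsf k(T) = \frac{26}{6} > \frac{25}{6} = \frac{r(2n-1)}{2n}$.

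The delicate part is the non-existence of a zero-sum free $T^* = g_1 g_2 T'$ with $T = (g_1 + g_2) T'$. Setting $\gamma = g_1+g_2 \in \supp T$, a short argument using zero-sum freeness of $T$ reduces the claim to the following: for every such $\gamma$ and every admissible pair $(g_1, g_2) \in (G \setminus \{0\})^2$ with $g_1 + g_2 = \gamma$, at least one of $-g_1, -g_2$ lies in $\Sigma(T\gamma^{-1}) \cup \{0\}$---equivalently, the set $\bigl(-\Sigma(T\gamma^{-1})\bigr) \cup \bigl(\gamma + \Sigma(T\gamma^{-1})\bigr) \cup \{0, \gamma\}$ covers all of $G$. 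I expect this covering condition to be the main obstacle. Girard's conjecture for $C_6^5$ would settle it immediately via the cross-number estimate $\mathsf k(T^*) \geq \mathsf k(T) > \frac{25}{6}$, but that conjecture is open; I would therefore complete the proof by a direct computer enumeration in $G = C_6^5$, checking each $\gamma \in \{g\} \cup \supp(S^*)$ and each admissible pair $(g_1, g_2)$, in the spirit of the computer-assisted search described in Section~\ref{4}.
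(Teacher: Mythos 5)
Your proposal is correct and follows essentially the same route as the paper: the authors take exactly your sequence $T=(2e_1')^{n-1}\,S^*\,\prod_{i=6}^{r}e_i^{2n-1}$ and verify zero-sum freeness via the trivial intersection $\langle 2e_1',e_6,\ldots,e_r\rangle\cap\langle e_1,\ldots,e_5\rangle=\{0\}$, which is your coordinate argument in different clothing, and the length, multiplicity, and cross-number computations agree. For the $n=3$, $r=5$ non-existence of $T^*$ the paper likewise offers no theoretical argument but reports a numerical check (a variant of the SEA with reduced search depth), so your proposed finite enumeration---together with your correct reformulation as the covering condition $\bigl(-\Sigma(T\gamma^{-1})\bigr)\cup\bigl(\gamma+\Sigma(T\gamma^{-1})\bigr)\cup\{0,\gamma\}=G$ for each $\gamma\in\supp(T)$---is not a gap relative to the published proof.
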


\begin{proof}
Let $(e_1',e_2, \ldots, e_r)$ be a basis of $G$ with $\ord (e_1') =
\ord (e_2) = \ldots = \ord (e_r) = 2n$. Let $e_1 = n e_1'$ and $S^*
\in \mathcal F ( \langle e_1, e_2, e_3, e_4, e_5 \rangle)$ be as
constructed in the proof of Theorem \ref{3.1}. Then
\[
|S^*| = 8n-2 \quad \text{and} \quad \mathsf k (S^*) = \frac{|S|}{2n}
\,.
\]
Since $\ord (2 e_1') = n$ and $\langle 2e_1', e_6, \ldots,
e_r\rangle \cap \langle e_1, \ldots, e_5\rangle = \{0\}$, the
sequence
\[
T = (2e_1')^{n-1} S^* \prod_{i=6}^r e_i^{2n-1}
\]
is zero-sum free and has the required properties.\\
In the case $n=3$ and
$r=5$, we have checked numerically---by a variant of the SEA (see Algorithm \ref{SEA}) with reduced search depth---that there is no such sequence
$T^*$, and the remaining assertions follow from the general case of the corollary.
\end{proof}

\medskip
\begin{remark} \label{3.3}
Thus, for the group $G = C_6^5$, the sequence $T$ given in Corollary
\ref{3.2} shows that the Conjecture is sharp, in the sense that the
assumption $|S| \ge \mathsf d^* (G)$ cannot be weakened to $|S| \ge
\mathsf d^* (G)-1$. But it shows much more.

Suppose that $G$ is cyclic of order $|G| = n \ge 3$. A simple
argument shows that $\mathsf d (G) = \mathsf d^* (G) = n-1$ and
every zero-sum free sequence $S$ of length $|S| = n-1$ has the form
$S = g^{n-1}$ for some $g \in G$ with $\ord (g) = n$. It was a
well-investigated problem in Combinatorial Number Theory to extend
this structural result to shorter zero-sum free sequences. In 2007,
S. Savchev and F. Chen could finally show that, for every zero-sum
free sequence $S$ of length $|S|
> (n+1)/2$, there is a $g \in G$ such that $S = (n_1g) \cdot \ldots \cdot (n_lg)$,
where \ $l = |S| \in
      \N$, $1 = n_1 \le \ldots \le n_l$, $n_1 + \ldots + n_l = m < \ord (g)$
      and $\Sigma (S) = \{g, 2g, \ldots , m g \}$ (see
      \cite{Sa-Ch07a} and \cite[Theorem 5.1.8]{Ge09a}).
Thus $S$ is obtained by taking some factorization $(g^{n_1})\cdot\ldots\cdot(g^{n_l})=g^{m-1}$ of the sequence $g^{m-1}$ and replacing each $g^{n_i}$ by $\sigma(g^{n_i})=n_ig$ for $i \in [1,l]$.
By Corollary \ref{3.2}, such a result does not hold for $C_6^5$, not even for zero-sum free sequences of length $\mathsf d^*(G)-1$.
\end{remark}

\bigskip
\section{Description of the Computational Approach} \label{4}
\bigskip

Computational methods have already been used successfully for a
variety of zero-sum problems (see recent work of G. Bhowmik, Y.
Edel, C. Elsholtz, I. Halupczok, J.-C. Schlage-Puchta et al.
\cite{El04, Ed08a,E-E-G-K-R07, Bh-Ha-SP10a}). Inspired by former
work in the groups $C_2^2 \oplus C_{2n}^3$ for $n \ge 3$ odd, we
found many examples of zero-sum free sequences $S$ over $G = C_2
\oplus C_6^4$ of length $|S| = \mathsf d ^* (G) + 1$. These were
used as starting points in a computer based search in the group $C_2
\oplus C_{10}^4$, which will be explained in detail below.

\begin{algorithm}
\caption{Sequence Extension Algorithm: $(g_1,g_2,g_3,g_4,g_5,g_6) \leftarrow \mathrm{SEA}(S)$}
\label{SEA}
\begin{algorithmic}
\STATE $\sigma_0 \leftarrow -\Sigma(S) \cup \lbrace0\rbrace$
\FORALL{$g_1\in G$ such that $g_1\notin \sigma_0$}
\STATE $\sigma_1 \leftarrow \emptyset, \, \sigma_2 \leftarrow \emptyset, \, \sigma_3 \leftarrow \emptyset, \, \sigma_4 \leftarrow \emptyset, \, \sigma_5 \leftarrow \emptyset$
\STATE $G_1 \leftarrow \emptyset, \, G_2 \leftarrow \emptyset, \, G_3 \leftarrow \emptyset, \, G_4 \leftarrow \emptyset, \, G_5 \leftarrow \emptyset$
\STATE $\sigma_1 \leftarrow \sigma_0 \cup (\sigma_0 - g_1)$
\FORALL{$g \in G$ such that $g \notin \sigma_1$}
\STATE $G_1 \leftarrow G_1 \cup \lbrace g \rbrace$
\ENDFOR
\FORALL{$g_2 \in G_1$ such that $g_2 \leq g_1$}
\STATE $(G_2,\,\sigma_2) \leftarrow \mathrm{SCA}( G_1, \sigma_1, g_2 ) $
\FORALL{$g_3 \in G_2$ such that $g_3 \leq g_2$}
\STATE $(G_3,\,\sigma_3) \leftarrow \mathrm{SCA}( G_2, \sigma_2, g_3 ) $
\FORALL{$g_4 \in G_3$ such that $g_4 \leq g_3$}
\STATE $(G_4,\,\sigma_3) \leftarrow \mathrm{SCA}( G_3, \sigma_3, g_4 ) $
\FORALL{$g_5 \in G_4$ such that $g_5 \leq g_4$}
\STATE $(G_5,\,\sigma_4) \leftarrow \mathrm{SCA}( G_4, \sigma_4, g_5 ) $
\FORALL{$g_6 \in G_5$ such that $g_6 \leq g_5$}
\RETURN $(g_1,g_2,g_3,g_4,g_5,g_6)$
\ENDFOR
\ENDFOR
\ENDFOR
\ENDFOR
\ENDFOR
\ENDFOR
\end{algorithmic}
\end{algorithm}

\begin{algorithm}
\caption{Sumset Computation Algorithm: $(G', \sigma') \leftarrow \mathrm{SCA}(G, \sigma, g)$}
\label{SCA}
\begin{algorithmic}
\STATE $\sigma' \leftarrow \sigma$
\STATE $G' \leftarrow \emptyset$
\FORALL{$h \in G$}
\IF{$(g+h) \in \sigma$}
\STATE $\sigma' \leftarrow \sigma' \cup \lbrace h \rbrace$
\ELSE
\STATE $G' \leftarrow G' \cup \lbrace h \rbrace$
\ENDIF
\ENDFOR
\RETURN $(G', \sigma')$
\end{algorithmic}
\end{algorithm}

The Sequence Extension Algorithm (SEA) (see Algorithm~\ref{SEA}) uses a smart brute force approach, where the computation time is significantly reduced by algorithmic short-cuts, efficient data structures for set testing, and fast look-up tables for group operations. The program was implemented in the C/C++ programming language.
Furthermore, MPI parallelization was used to enable the execution of the program on cluster computers and supercomputers with thousands of computing cores. The parallelization scheme is a simple master-slave algorithm, where the master thread partitions the outermost loop over all group elements and sends out these work items to the available pool of slave threads. In this scheduler, a dynamic policy with chunk size one is used; that is, the master thread sends out only one work item to the next slave thread available. Although this leads to some communication overhead between the master and the slave threads, it is quite reasonable as the necessary computation time for one work item can vary by a factor of more than $25000$, i.e., from less than a second up to a few hours.
The first major algorithmic short-cut is restricting the search to ascending sequences with respect to coordinates in a basis and lexicographic ordering, thus omitting all permutations arising from the same sequence.
The second short-cut is keeping track of all group elements not in the set of negative subsums in additional vectors---namely $G_1,\,G_2,\,G_3,\,G_4$, and $G_5$ in the SEA (see Algorithm~\ref{SEA}). These vectors are used to massively speed up the Sumset Computation Algorithm (SCA) (see Algorithm~\ref{SCA}) by avoiding many unnecessary tests. Typically, the vectors $G_i$, for $i\in[1,5]$, consist of only a few hundred group elements while $\#G=20000$---this means a speed up by a factor of about $20$ to $200$ in each step of the descending inner loops in the SCA (see Algorithm~\ref{SCA}). As a last step of optimization, we pre-compute a look-up table for subtraction in $G$, which is stored in a very specific way such that we can use it for the tests in the SCA (see Algorithm~\ref{SCA}) and benefit from data caching and pre-fetching on modern CPUs while accessing the elements in a single line of the look-up table.

\begin{table}[ht]
\centering
\begin{tabular}{|c|c c c c|c|}
\hline
Test Set & Test Sequences & Complete & Hits & Extensions &Compute Time\\[0.5ex]
\hline\hline
a & 81 & 27 & 0 & 0 & 28,366\\
b & 81 & 52 & 5 & 92 & 26,670\\
c & 81 & 52 & 5 & 252 & 26,688\\
d & 81 & 75 & 4 & 196 & 15,808\\
\hline
 & 324 & 206 & 14 & 540 & \textbf{97,534}\\
\hline
\end{tabular}
\caption{Statistics of the four test runs $a$, $b$, $c$, and $d$ on the \emph{cineca} supercomputer. The compute time is given in hours w.r.t. a single IBM Power6 4.7 GHz SMT CPU core.}
\label{TestSet}
\end{table}

The computations for the test sequences $a$, $b$, $c$, and $d$ on the \emph{cineca} supercomputer used 64 threads with a single master and 63 slaves. The parallel efficiency of the algorithm, due to the independent nature of the computations, proved to be very good. The \emph{cineca} supercomputer is an IBM pSeries 575 Infiniband cluster with 168 computing nodes and 5376 computing cores. Every node has eight IBM Power6 4.7 GHz quad-core CPUs with simultaneous multi-threading (SMT) and 128 GB of shared memory.
Performance tests of the parallel algorithm showed that the best configuration to run a single work item on is a single computing node with 64 threads with SMT enabled. Single node work loads were also scheduled typically within a day on the \emph{cineca} supercomputer. The complete run of all test sets $a$, $b$, $c$, and $d$ took about a week on the \emph{cineca} supercomputer with an equivalent of nearly 100,000 SMT CPU core hours computation time. The run time of a work item on a single computing node was limited to six hours wall clock time by batch processing system policy. Nevertheless, most work items finished within these time restrictions, namely, 206 out of 324, and the ones that did not finish had most of the time only very few elements left to check, so we decided not to reschedule these work items for completion. The full statistics of the computations is given in Table~\ref{TestSet}.

\bigskip

% \bibliography{zerosum,fact,ger,hk,ideal}
% \bibliographystyle{amsplain}

\providecommand{\bysame}{\leavevmode\hbox to3em{\hrulefill}\thinspace}
\providecommand{\MR}{\relax\ifhmode\unskip\space\fi MR }
% \MRhref is called by the amsart/book/proc definition of \MR.
\providecommand{\MRhref}[2]{%
  \href{http://www.ams.org/mathscinet-getitem?mr=#1}{#2}
}
\providecommand{\href}[2]{#2}

\end{document}